\definecolor{black}{rgb}{0.0, 0.0, 0.0}
\definecolor{red}{rgb}{1.0, 0.5, 0.5}
\title[   ]{Inviscid limit to the shock waves for the fractal Burgers equation}
\author[Akopian]{Sona Akopian}
\address[Sona Akopian]{\newline Division of Applied Mathematics, \newline Brown University}
\email{sona\_akopian@brown.edu}
\author[Kang]{Moon-Jin Kang}
\address[Moon-Jin Kang]{\newline Department of Mathematic \& Research Institute of Natural Sciences, \newline Sookmyung Women's University, Seoul 140-742, Korea}
\email{moonjinkang@sookmyung.ac.kr}
\author[Vasseur]{Alexis F. Vasseur}
\address[Alexis F. Vasseur]{\newline Department of Mathematics, \newline The University of Texas at Austin, Austin, TX 78712, USA}
\email{vasseur@math.utexas.edu}
\newcommand{\beq}{\begin{equation}}
\newcommand{\eeq}{\end{equation}}
\newtheorem{theorem}{Theorem}[section]
\newtheorem{lemma}{Lemma}[section]
\newtheorem{proposition}{Proposition}[section]
\newtheorem{remark}{Remark}[section]
\newcommand{\bbr}{\mathbb R}
\newcommand{\R}{\mathbb R}
\newcommand{\eps}{\varepsilon}
\newcommand{\Seps}{S_\eps}
\newcommand{\ueps}{u_\eps}
\newcommand{\ve}{v_\eps}
\newcommand{\vphid}{\varphi_\delta}
\newcommand{\vphi}{\varphi}
\newcommand{\dx}{\mathrm{d}x}
\newcommand{\dy}{\mathrm{d}y}
\newcommand{\dt}{\mathrm{d}t}
\numberwithin{equation}{section}
\begin{document}
\bibliographystyle{plain}
\date{\today}

\subjclass[2010]{35L65, 35L67, 35B35, 35B40} \keywords{fractal Burgers equation, fractional Laplacian, scalar conservation laws, shock waves, inviscid limit, large perturbation, relative entropy}

\thanks{\textbf{Acknowledgment.} S. Akopian and M.-J. Kang were partially supported by the NRF-2019R1C1C1009355. A. F. Vasseur was partially supported by the NSF Grant DMS 1209420. 
}

\begin{abstract} 
We show the vanishing viscosity limit to entropy shocks for the fractal Burgers equation  in one space dimension. 
More precisely, we quantify the rate of convergence of the inviscid limit in $L^2$ for large initial perturbations around the entropy shock on any bounded time interval. 
This is the first result on the inviscid limit to entropy shock for the fractal Burgers equation with the quantified convergence, for large initial perturbations. 
\end{abstract}
\maketitle \centerline{\date}


\section{Introduction and main results}
\setcounter{equation}{0}

We consider the Burgers equation with the fractional Laplacian in one space dimension:
\beq\label{main_1}
      \partial_t u + \partial_x \left(\frac{u^2}{2}\right) = \Delta_x^{\alpha/2}u , \quad t>0,~x\in \R,
\eeq
where $\alpha$ denotes the fractional power of the Laplacian in one dimension, and the fractional Laplacian can be written as a singular integral operator:
\beq\label{fractional}
\Delta_x^{\alpha/2}u (x) = c_\alpha \mbox{P.V.} \int_\bbr \frac{u(y)-u(x)}{|y-x|^{1+\alpha}} dy.
\eeq
The equation \eqref{main_1} is sometimes called the fractal Burgers equation. It has been extensively used as a toy model for the study of the fractal (anomalous) diffusion for a variety of physical phenomena where shock creation is an important ingredient. This includes the growth of molecular interfaces, traffic jams and the mass distribution for the
large scale structure of the universe 
(see for example, Biler et al. \cite{BFW} for a discussion of this model).

For $0<\alpha \leq 1,$ the well-posedness theory of \eqref{main_1} has been established in Alibaud \cite{Alibaud} and in Kiselev-Nazarov-Shterenberg \cite{kiselev} for a different class of initial data and with further analysis 
about finite time blowup for $\alpha < 1$ and analyticity for $\alpha \geq 1$ (see Chan-Czubak \cite{Chan} for $\alpha=1$). In the case of $1<\alpha < 2$, which is the focus of our work, prior to \cite{kiselev} was the work of Droniou-Gallouet-Vovelle \cite{DGV}, where the authors used a semi-group approach to obtain existence, uniqueness, smoothness and boundedness of solutions to \eqref{main_1} as well as their derivatives. Concerning time-asymptotic stability to rarefaction waves, we refer to Alibaud-Imbert-Karch \cite{AIK} and Karch-Miao-Xu \cite{KMX}.\\

In this article, we study the vanishing viscosity limit ($\eps\to0$) of the scaled equation
\begin{align}
\begin{aligned}\label{main_B}
 \left\{ \begin{array}{ll}
        \partial_t \ueps + \partial_x \left(\frac{\ueps^2}{2}\right) = \eps \Delta_x^{\alpha/2}\ueps , \quad t>0,~x\in \R,\\
         \ueps(x,0) = u_0(x), \end{array} \right.
\end{aligned}
\end{align}
in the case of $1<\alpha < 2$. \\
Note that for a solution $u$ to the equation \eqref{main_1}, $\ueps(x,t):=u(x/\eps^\beta, t/\eps^\beta)$ solves the scaled equation \eqref{main_B}, where 
\[
\beta=\frac{1}{\alpha-1}, 
\hspace{.1in} \alpha \in (1,2).
\] 
We aim to quantify the vanishing viscosity limit ($\eps\to0$) of \eqref{main_B} with a general initial datum towards entropy shock waves of the (inviscid) Burgers equation:
\beq\label{inv-main}
\partial_t u + \partial_x  \left(\frac{u^2}{2}\right) = 0.
\eeq
We are particularly interested in the case where the initial datum carries too much entropy for the structure of the layer to be preserved in the inviscid limit.\\

It is well known that for any constants $u_-$ and $u_+$ with $u_->u_+$, the equation \eqref{inv-main} admits the entropy shock wave $S_0(x-\sigma t)$ connecting the two end states $u_\pm$ as follows (for example, see \cite{Serre_book}) :
\begin{align}
\begin{aligned}\label{inv-shock}
S_0(x-\sigma t) =  \left\{ \begin{array}{ll}
       u_- \quad &\mbox{if } x<\sigma t\\
        u_+ \quad &\mbox{if } x>\sigma t, \end{array} \right.
\end{aligned}
\end{align}
where the velocity $\sigma$ is determined by the Rankine-Hugoniot condition:
\[
\sigma = \frac{A(u_-)-A(u_+)}{u_- -u_+},\quad\mbox{where~} A(u):=\frac{u^2}{2}.
\]  
Note that the condition $u_->u_+$ ensures that the shock wave \eqref{inv-shock} is an entropy solution to \eqref{inv-main}.\\
On the other hand, we refer to Chmaj \cite{Chmaj} for the existence of shock layer to the fractal Burgers equation \eqref{main_1} in the case of $1<\alpha < 2$. That is, the following was proved: 
for any $u_->u_+$, there exists a travelling wave $S_1(x-\sigma t)$ as a smooth solution to
\begin{align}
\begin{aligned}\label{shock_eq}
 \left\{ \begin{array}{ll}
        -\sigma S_1' +  \left(\frac{S_1^2}{2}\right)' = \Delta_{\xi}^{\alpha/2} S_1,\\
         \lim_{\xi\to\pm\infty} S_1(\xi) = u_\pm. \end{array} \right.
\end{aligned}
\end{align}
However, the rate of convergence of the shock layer to the two end states $u_\pm$ is not known. \\

We now present our main result.
\begin{theorem}\label{thm}
Assume $1<\alpha<2$ in the equation \eqref{main_B}.
For any constants $u_-$ and $u_+$ with $u_->u_+$, let $u_0$ be the initial datum such that
\[
u_0\in L^\infty(\bbr),\qquad u_0 -S_0 \in L^2(\bbr)\qquad\mbox{and}\qquad \left(\frac{d}{dx} u_0\right)_+ \in L^2(\bbr),
\]
where $\left(\frac{d}{dx} u_0\right)_+$ denotes the positive part of $\frac{d}{dx} u_0$.\\
For any $T>0$, there exists a constant $C(T)>0$ such that  
the following holds:\\
For any solution $\ueps$ to \eqref{main_B}, there exists a  Lipschitz continuous shift $t\in[0,T]\mapsto X_\eps(t)$ with $X_\eps(0)=0$ such that 
for all $t\le T$,
\beq\label{result1}
 \|\ueps(\cdot+X_\eps(t),t)-S_0(\cdot-\sigma t)\|_{L^2(\R)} \le \|u_0 -S_0\|_{L^2(\R)}   + C(T) \psi(\eps) .
\eeq
Here,
\[
\psi(\eps):=\inf_{\delta\ge 4} \left(\sqrt{\delta\eps^\beta +  \sqrt{\delta\eps^\beta} +  \left( S_1 \left(\sqrt\delta\right) - u_+ \right) + \left( u_- - S_1 \left(-\sqrt\delta\right)  \right)  + \left( \frac{1}{\delta}\right)^{\alpha-1}  } \right) + \eps^{\beta/2} ,
\]
where $S_1$ denotes the viscous shock satisfying \eqref{shock_eq}.
\end{theorem}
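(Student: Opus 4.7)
The plan is to apply the relative entropy method with a dynamically constructed shift, an approach developed for the inviscid limit to shocks in the large-perturbation regime. Since the viscous shock $S_1$ from \eqref{shock_eq} is not explicit and its decay to $u_\pm$ at infinity is not quantified in \cite{Chmaj}, I would not compare $\ueps$ directly with the rescaled profile $S_1((x-\sigma t)/\eps^\beta)$, but instead introduce a truncated reference $\Steps(x,t)$ equal to $S_1((x-\sigma t)/\eps^\beta)$ on the box $|x-\sigma t|\leq \sqrt\delta\,\eps^\beta$ and equal to the constants $u_\pm$ outside, where $\delta \geq 4$ is a free parameter optimized at the end to produce the infimum in $\psi(\eps)$. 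In the shift frame $\tilde \ueps(x,t) := \ueps(x+X_\eps(t),t)$, the goal is to control
\[
\mathcal{E}(t) := \int_\R \tfrac12 \bigl(\tilde \ueps(x,t) - \Steps(x,t)\bigr)^2\, dx
\]
by a Gronwall-type inequality, and then invoke the triangle inequality with a direct estimate of $\|\Steps(\cdot,t) - S_0(\cdot - \sigma t)\|_{L^2}$.

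Differentiating $\mathcal{E}(t)$ and using \eqref{main_B} together with the approximate profile identity for $\Steps$, the time derivative decomposes into: a shift term proportional to $(\dot X_\eps - \sigma)\int (\tilde \ueps - \Steps)\,\partial_x \Steps\, dx$; a nonlocal dissipation term producing a favorable $\dot H^{\alpha/2}$-seminorm of $\tilde\ueps - \Steps$; defect terms arising from the truncation at $\pm\sqrt\delta\,\eps^\beta$ (which will produce the $(S_1(\pm\sqrt\delta) - u_\pm)$ ingredients of $\psi$) and from the tail of the fractional kernel (producing the $(1/\delta)^{\alpha-1}$ piece); and the nonlinear flux $\int (\tilde \ueps - \Steps)^2 \partial_x \Steps\, dx$. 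I would then prescribe $X_\eps$ via a nonlinear ODE of the form
\[
\dot X_\eps(t) = \sigma + \frac{M}{\eps^\beta}\int_\R (\tilde \ueps(x,t) - \Steps(x,t))\,\partial_x \Steps(x,t)\, dx, \qquad X_\eps(0) = 0,
\]
with $M$ large enough that the shift contribution dominates and absorbs the dangerous linear-in-$(\tilde\ueps - \Steps)$ pieces; a Cauchy--Lipschitz argument (after a mild smoothing of $\Steps$ near the truncation corners, if needed) furnishes the Lipschitz shift with $|\dot X_\eps - \sigma|$ bounded by a multiple of $\sqrt{\mathcal{E}(t)}$.

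I expect the main obstacle to be the treatment of the nonlocal operator. Writing out \eqref{fractional} as a double integral and symmetrizing yields the good quadratic dissipation of $\tilde\ueps - \Steps$ together with a residual $\int (\tilde \ueps - \Steps)\,\Delta^{\alpha/2}\Steps\, dx$ which must be estimated against the tails of the kernel $|y-x|^{-(1+\alpha)}$; the fact that $\Steps$ equals $u_\pm$ outside the box of size $\sqrt\delta\,\eps^\beta$ produces a residual scaling as $\delta^{-(\alpha-1)}$, matching the $(1/\delta)^{\alpha-1}$ term in $\psi(\eps)$. A second obstacle is controlling the nonlinear flux term without any smallness assumption on $\tilde\ueps - \Steps$: this is where the hypothesis $(\partial_x u_0)_+ \in L^2(\R)$ enters, via an Oleinik-type one-sided estimate on $\partial_x \ueps$ that is preserved in the inviscid limit (for $1<\alpha<2$ the fractional diffusion is strong enough to propagate an $L^2$ bound on $(\partial_x \ueps)_+$). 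Combining the resulting Gronwall inequality for $\mathcal{E}(t)$ with the elementary bound
\[
\|\Steps(\cdot,t) - S_0(\cdot - \sigma t)\|_{L^2(\R)} \lesssim \sqrt{\delta\eps^\beta} + (S_1(\sqrt\delta) - u_+) + (u_- - S_1(-\sqrt\delta)),
\]
and optimizing over $\delta \geq 4$ yields \eqref{result1} with the stated rate $\psi(\eps)$, the additive $\eps^{\beta/2}$ accounting for the residual shift/speed mismatch that is independent of the truncation.
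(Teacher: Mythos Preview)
Your decomposition does not close, and the gap is at the step where you claim the shift and the one-sided bound together control the nonlinear flux. In the relative-entropy evolution (take $\sigma=0$ without loss) the flux contribution reads
\[
-\int_\R A(\tilde u_\eps\mid\Steps)\,\partial_x\Steps\,dx
\;=\;\int_\R A(\tilde u_\eps\mid\Steps)\,|\partial_x\Steps|\,dx\;\ge\;0,
\]
and since your truncated profile still carries the full transition $\int|\partial_x\Steps|\,dx\approx u_--u_+$ on the inner box, this term is $O(1)$ for large perturbations. Your integral shift produces only $\pm\frac{M}{\eps^\beta}\bigl(\int(\tilde u_\eps-\Steps)\partial_x\Steps\,dx\bigr)^2$, which is either of the wrong sign or, by Cauchy--Schwarz, controlled \emph{by} (not controlling) the pointwise-weighted quantity above. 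The bound $\|(\partial_x u_\eps)_+\|_{L^2}\le\|(\partial_x u_0)_+\|_{L^2}$ says nothing about $\int(\tilde u_\eps-\Steps)^2|\partial_x\Steps|\,dx$, so the hypothesis does not enter the way you describe.

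The paper's mechanism is structurally different. It compares with the \emph{untruncated} layer $S_\eps(x)=S_1(x/\eps^\beta)$ but localizes the relative entropy by a cutoff $\varphi_\delta^2(|x|/\eps^\beta)$ that is small on the layer region; this is what makes the bad flux term (called $H_2$) harmless, producing exactly the $(S_1(\pm\sqrt\delta)-u_\pm)$ and $(1/\delta)^{3/2}$ pieces. The cost of the cutoff is a boundary term $H_1=\int\varphi_\delta^2\,\partial_x\bigl[\eta(\ve\mid S_\eps)\dot X - F(\ve,S_\eps)\bigr]\,dx$, and it is \emph{here} that the shift and the one-sided derivative bound act: the shift is prescribed \emph{pointwise} via Leger's normalized relative entropy flux, $\dot X(t)=f(\ve(0,t),S_\eps(0))$ with $f=F/\eta(\cdot\mid\cdot)$, and the monotonicity of $f$ in each argument combined with $\|(\partial_x u_\eps)_+\|_{L^2}\le C$ gives $H_1\le C\sqrt{\delta\eps^\beta}$. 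Finally, the $(1/\delta)^{\alpha-1}$ term comes from the commutator between the cutoff and $\Delta^{\alpha/2}$, not from a truncation defect. In short, neither the role you assign to $(\partial_x u_0)_+$ nor your choice of shift matches a workable mechanism; both must be replaced by the cutoff-plus-pointwise-shift scheme.
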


\vspace{0.2cm}
\begin{remark}
Note that the shift $X_\eps$ depends both on $\eps$ and the initial value $u_0$. The shift cannot be reduced to the actual velocity of the shock, since at the limit $\eps$ goes to zero, the contraction in $L^2$ without extra shift is false (see Leger \cite{Leger}). 
The shift $X_\eps$ will be constructed as   a solution to the ODE \eqref{ODE}.  In the following sentences of  \eqref{ODE}, we will justify the existence and uniqueness of the Lipschitz continuous solution $X_\eps$.
In what follows, we drop the $\eps$-dependence of the shift $X_\eps$ for simplicity.
\end{remark}

\begin{remark}
Since $\lim_{\xi\to\pm\infty} S_1(\xi) = u_\pm$ and $1<\alpha<2$, note that (for example, by choosing $\delta=\eps^{-\beta/2}$)
\[
\psi(\eps)  \to 0\quad \mbox{as } \eps\to 0.
\]
Therefore, Theorem \ref{thm} provides an explicit rate of convergence for the inviscid limit to the shock.\\
If the shock layer $S_1$ approaches the end states $u_\pm$ exponentially fast as in the case of the classical Laplacian, i.e., $\alpha=2$ (for example, see \cite{Kang-V-1}), then there exist constants $\eps_0$ and $C>0$ such that
\beq\label{thm_rate}
\psi(\eps) \le C \eps^{\frac{1}{2(2\alpha-1)}} ,\quad \forall \eps<\eps_0.
\eeq
Indeed, by choosing
\[
\delta = \eps^{-\frac{1}{(\alpha-1)(2\alpha-1)}} 
\]
there exists $\eps_0>0$ such that for all $0<\eps<\eps_0$,
\begin{align}
\begin{aligned}\label{main}
& \delta\eps^\beta \ll \sqrt{\delta \eps^\beta} = \eps^{\frac{1}{2\alpha-1}}, \\
& \left| S_1 \left(\sqrt{\pm\delta}\right) - u_\pm \right| \le C e^{- c_\pm \sqrt\delta}  \ll \left( \frac{1}{\delta}\right)^{\alpha-1} =\eps^{\frac{1}{2\alpha-1}}. 
\end{aligned}
\end{align}
\end{remark}

\begin{remark}
From a special layer study, one can see that the optimal rate of convergence  is $\eps^{\beta/2}$. Indeed, if we consider the shock layer $S_1(x-\sigma t)$ in \eqref{shock_eq},
then $S_1((x-\sigma t)/\eps^\beta)$ is a shock layer of \eqref{main_B} as a travelling wave solution of \eqref{main_B} with initial datum $S_1(x/\eps^\beta)$. 
In this case, the rate of convergence is of order $\eps^{\beta/2}$, since
\beq\label{shock-conv}
\sqrt{\int_\R \left|S_1\left(\frac{x-\sigma t}{\eps^\beta}\right) - S_0 (x-\sigma t) \right|^2 \dx }= \sqrt{\eps^\beta \int_\R \left|S_1\left(x\right) - S_0 (x) \right|^2 \dx }=C\eps^{\beta/2} .
\eeq
Therefore, if the shock layer $S_1$ approaches the end states exponentially fast, the rate of convergence $\eps^{\frac{1}{2(2\alpha-1)}}$  in \eqref{thm_rate} is slightly worse than the optimal rate $\eps^{\beta/2}$ above, because
\[
\eps^{\beta/2} = \eps^{\frac{1}{2(\alpha-1)}} < \eps^{\frac{1}{2(2\alpha-1)}}.
\]
Note that such a layer study is the special case of small initial perturbations such as
\[
 \|u_0 -S_0\|_{L^2(\R)}  = \mathcal{O}(\eps^{\beta/2}) .
\]
In the  case where $u_0$ is the same initial data as the one of \eqref{inv-main}, i.e, no initial perturbation, we refer to the result of Droniou \cite{Droniou} on the convergence of solution to \eqref{main_B} towards entropy solution to \eqref{inv-main}. \\
However, those studies collapse in the case of large initial perturbation as
\[
 \|u_0 -S_0\|_{L^2(\R)} \gg \eps^{\beta/2}.
\] 
In this situation, there is too much entropy for the asymptotic limit of the layer structure to be true. 
So, the physical layer may be destroyed. 
Therefore, Theorem \ref{thm} is the first result on the inviscid limit to the entropy shock even for large initial perturbation, although the rate of convergence is not optimal.
\end{remark}

\vspace{1cm}

\section{Proof of Theorem \ref{thm}}
We prove Theorem \ref{thm} for the case of a conservation law with a strictly flux: given a strictly convex flux $A$, consider
\begin{align}
\begin{aligned}\label{main}
 \left\{ \begin{array}{ll}
        \partial_t \ueps + \partial_x A(\ueps) = \eps \Delta_x^{\alpha/2}\ueps , \quad t>0,~x\in \R.\\
         \ueps(x,0) = u_0(x), \end{array} \right.
\end{aligned}
\end{align}
Although the existence issue of the shock layer of \eqref{main} is still open for general convex fluxes, we provide the proof of Theorem \ref{thm} in the general setting.\\
We also mention that Droniou-Gallouet-Vovelle \cite{DGV} proved the global existence and uniqueness of smooth solutions to \eqref{main} with the $L^\infty$-bounded initial data in the case of $1<\alpha<2$, and
\[
\|\ueps\|_{L^\infty(\R)}\le \|u_0\|_{L^\infty(\R)},
\]
which will be used in our proof.

\subsection{Ideas and useful lemma}
Contrary to the proof of the result \cite{CV} for the case of the (local) Laplacian operator, i.e., $\alpha=2$, 
the nonlocality of the fractional Laplacian leads us to first study on the convergence of the solution $\ueps$ towards the shock layer (of width $\eps^\beta$) of \eqref{main}.
Once we prove it, the desired result \eqref{result1} would be obtained by using the obvious convergence from the shock layer to the inviscid shock as in \eqref{shock-conv}.     

Without loss of generality, we only deal with the stationary shock wave $S_0$, i.e., $\sigma=0$.
We first see from \eqref{shock_eq} that the (stationary) shock layer $S_1$ of \eqref{main_1} is a solution to
\begin{align}
\begin{aligned}\label{eq_S1}
 \left\{ \begin{array}{ll}
      (A(S_1))' = \Delta_{x}^{\alpha/2} S_1,\\
         \lim_{x\to\pm\infty} S_1(x) = u_\pm. \end{array} \right.
\end{aligned}
\end{align}
Then, $\Seps(x):=S_1(x/\eps^\beta)$ is the associated shock layer of \eqref{main} as a solution to
\begin{align}
\begin{aligned}\label{eq_Seps}
 \left\{ \begin{array}{ll}
      (A(\Seps))' =\eps \Delta_{x}^{\alpha/2} \Seps,\\
         \lim_{x\to\pm\infty} \Seps(x) = u_\pm. \end{array} \right.
\end{aligned}
\end{align}
In our analysis, we will use the monotonicity property of the shock layer, which is proved in the following lemma.
\begin{lemma}\label{lem:mono}
If $S_1(x)$ is a smooth shock layer of \eqref{eq_S1}, then $S_1'(x) \leq 0$ for all $x.$
\end{lemma}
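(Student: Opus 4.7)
The plan is to differentiate the profile equation and invoke a pointwise nonlocal maximum principle for $w:=S_1'$. Since $\Delta^{\alpha/2}$ commutes with $\partial_x$ on smooth profiles with bounded derivatives, differentiating $(A(S_1))'=\Delta^{\alpha/2}S_1$ gives
\[
A''(S_1)\,w^2 + A'(S_1)\,w' = \Delta^{\alpha/2} w.
\]
Suppose for contradiction that $M:=\sup_{x} w(x) > 0$ and, as a first reduction, that this supremum is attained at some $x^*\in\mathbb R$. Then $w'(x^*)=0$, and from the singular-integral representation together with $w(x^*)\ge w(y)$ for all $y$, one has $\Delta^{\alpha/2} w(x^*)\le 0$. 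Evaluating the identity above at $x^*$ forces
\[
0 \;<\; A''(S_1(x^*))\,M^2 \;=\; \Delta^{\alpha/2} w(x^*) \;\le\; 0,
\]
contradicting strict convexity of $A$.

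The main obstacle is then to rule out the possibility that the positive supremum of $w$ is only approached along a sequence $x_n\to\pm\infty$. I would handle this by first showing $w(x)\to 0$ as $x\to\pm\infty$. Boundedness of $S_1$ (coming from the end states and a maximum principle applied to the profile equation) together with the interior regularity theory for the fractional Laplacian in the subcritical regime $\alpha\in(1,2)$ yields a uniform bound on $S_1''$. An elementary mean-value argument then shows that if $|w(x_n)|\ge c>0$ along some $x_n\to\pm\infty$, the uniform control on $w'=S_1''$ would force $S_1$ to vary by a fixed amount over an interval of length independent of $n$, contradicting $S_1(x)\to u_\pm$. Once $w$ is known to vanish at both infinities, any positive supremum is attained at a finite point, and the first paragraph yields the desired contradiction. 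Hence $S_1'\le 0$ everywhere.
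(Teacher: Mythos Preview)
Your approach via a pointwise nonlocal maximum principle is correct and takes a genuinely different route from the paper. The paper proceeds by an energy argument: it multiplies the differentiated equation by $(S_1')_+$, integrates over $\mathbb R$, and uses a symmetrization of the nonlocal quadratic form to show
\[
\int_{\mathbb R}(S_1')_+\,\Delta^{\alpha/2}(S_1')\,dx
=-\frac12\int_{\mathbb R^2}\frac{\big((S_1')_+(x)-(S_1')_+(y)\big)^2}{|x-y|^{1+\alpha}}\,dx\,dy
-\int_{\mathbb R^2}\frac{(S_1')_+(x)(S_1')_-(y)}{|x-y|^{1+\alpha}}\,dx\,dy\le 0,
\]
while an integration by parts on the convective terms collapses the left-hand side to $\tfrac12\int A''(S_1)((S_1')_+)^3\,dx$; strict convexity then forces $(S_1')_+\equiv 0$. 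This is entirely self-contained and makes no appeal to the behavior of $S_1'$ at infinity.

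Your argument, by contrast, hinges on knowing that a positive supremum of $w=S_1'$ is actually attained, which you reduce to a uniform bound on $S_1''$. That bound is certainly believable in the subcritical regime---one can view $S_1$ as a stationary solution of the evolution problem and invoke the uniform-in-time smoothing of Droniou--Gallou\"et--Vovelle, or apply translation-invariant interior Schauder estimates for the nonlocal drift-diffusion equation $\Delta^{\alpha/2}S_1 = A'(S_1)\partial_x S_1$ with bounded drift---but it imports outside machinery that the paper's computation avoids. The trade-off: the paper's energy method is shorter and elementary once the symmetrization identity is in hand, whereas your pointwise scheme is more flexible and would adapt readily to other comparison or monotonicity statements for nonlocal profiles.
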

\begin{proof}
First, we take the derivative of both sides of (\ref{eq_S1}) to get
\beq\label{dif}
		A''(S_1) |S_1'|^2 + A'(S_1)S_1'' = \Delta^{\alpha/2}S_1'.
\eeq
Multiplying both sides of (\ref{dif}) by $(S_1')_+$ and using 
$S_1'' \mathbf{1}_{S_1'>0}=((S_1')_+)'$, we have
\[
A''(S_1) ((S_1')_+)^3 +A'(S_1) \left( \frac{((S_1')_+)^2}{2} \right)' = (\Delta^{\alpha/2} S_1')(S_1')_+ .
\]
We integrate both sides over $\R$ to get
\[
\int  A''(S_1) ((S_1')_+)^3 \,\dx 
+ \int A'(S_1) \left( \frac{((S_1')_+)^2}{2} \right)'\dx = \int (S_1')_+ \Delta^{\alpha/2} (S_1')  \,\dx.
\]
Note that using \eqref{fractional} and anti-symmetry, and the fact that $f=f_+ - f_-$ where $f_+$ and $f_-$ denote respectively the positive and negative parts of a function $f$, we have
\begin{align}
\begin{aligned}\label{genpa}
 &\int (S_1')_+ \Delta^{\alpha/2} (S_1')  \,\dx
  =-\int_{\R^2}(S_1')_+(x)\frac{S_1'(x)-S_1'(y)}{|x-y|^{1+\alpha}}\,dx\,dy\\
 &\quad=-\frac{1}{2}\int_{\R^2}\frac{((S_1')_+(x)-(S_1')_+(y))(S_1'(x)-S_1'(y))}{|x-y|^{1+\alpha}}\,dx\,dy\\
& \quad=-\frac{1}{2}\int_{\R^2}\frac{((S_1')_+(x)-(S_1')_+(y))^2}{|x-y|^{1+\alpha}}\,dx\,dy \\
&\qquad +\frac{1}{2}\int_{\R^2}\frac{((S_1')_+(x)-(S_1')_+(y))((S_1')_-(x)-(S_1')_-(y))}{|x-y|^{1+\alpha}}\,dx\,dy\\
& \quad=-\frac{1}{2}\int_{\R^2}\frac{((S_1')_+(x)-(S_1')_+(y))^2}{|x-y|^{1+\alpha}}\,dx\,dy -\int_{\R^2}\frac{(S_1')_+(x)(S_1')_-(y)}{|x-y|^{1+\alpha}}\,dx\,dy \le 0.
\end{aligned}
\end{align}
Moreover, since
\[
\int  A''(S_1) ((S_1')_+)^3 \,\dx 
+ \int A'(S_1) \left( \frac{((S_1')_+)^2}{2} \right)'\dx = \frac{1}{2}\int A''(S_1)((S_1')_+)^3\dx,
\]
we have
\[
\frac{1}{2}\int  A''(S_1) ((S_1')_+)^3 \,\dx \le 0.
\]
Therefore, using the strict convexity of the flux $A$, we have
\[
(S_1')_+ (x)=0,\qquad \mbox{for a.e. }~ x\in\R,
\]
which completes the proof.
\end{proof}

The following lemma will be used in the proof of Proposition \ref{prop:h1}.

\begin{lemma}\label{lem:L2}
If $u_\eps(x,t)$ is a solution of (\ref{main}) with $ \|(\partial_xu_0)_+\|_{L^2(\R)}<\infty$, then
\begin{equation*}
	\|(\partial_x\ueps(\cdot,t))_+\|_{L^2(\R)}
	\leq  \|(\partial_xu_0)_+\|_{L^2(\R)}.
\end{equation*}
\end{lemma}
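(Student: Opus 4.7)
The plan is to mimic the differentiation argument used for Lemma \ref{lem:mono}, but now for the time-dependent equation \eqref{main}. Since Droniou--Gallouet--Vovelle provides global smoothness of $\ueps$, I can differentiate \eqref{main} in $x$; letting $w(x,t):=\partial_x \ueps(x,t)$, one obtains
\[
\partial_t w + A'(\ueps)\,\partial_x w + A''(\ueps)\, w^2 = \eps\,\Delta_x^{\alpha/2} w.
\]
The goal is then to produce a differential inequality for $\frac12\int w_+^2\,\dx$ that shows this quantity is non-increasing in time.

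The next step is to multiply the above identity by the positive part $w_+$ and integrate over $\R$. The time derivative becomes $\frac{d}{dt}\int \frac{w_+^2}{2}\,\dx$, using $w\,\partial_t w\,\mathbf{1}_{w>0}=\partial_t(w_+^2/2)$. For the convective term, I would use $w_+^2 = w\cdot w_+$ and $w_+^3 = w^2 w_+$ to write
\[
\int A'(\ueps)\,\partial_x w\cdot w_+\,\dx = \int A'(\ueps)\,\partial_x\!\left(\frac{w_+^2}{2}\right)\dx = -\frac{1}{2}\int A''(\ueps)\, w_+^3\,\dx,
\]
after integrating by parts (boundary terms vanish because $w\to 0$ at infinity for a smooth $L^2$-type perturbation). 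Combined with the cubic term already present, the two convective contributions collapse to $+\tfrac12\int A''(\ueps)\,w_+^3\,\dx$, which is non-negative by the strict convexity of $A$ and can be dropped.

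The fractional Laplacian term is handled exactly as in \eqref{genpa}: replacing $S_1'$ by $w$ in that computation gives
\[
\int w_+\,\Delta^{\alpha/2} w\,\dx = -\frac12\int_{\R^2}\frac{(w_+(x)-w_+(y))^2}{|x-y|^{1+\alpha}}\,\dx\,\dy - \int_{\R^2}\frac{w_+(x)\,w_-(y)}{|x-y|^{1+\alpha}}\,\dx\,\dy \le 0.
\]
Combining the three estimates yields $\tfrac{d}{dt}\int w_+^2\,\dx \le 0$, and integrating in time gives the claimed bound.

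The only delicate point I foresee is the formal use of the chain rule for the positive part. Even though $w$ is smooth, $w_+$ is only Lipschitz, so one should in principle regularize by working with $\eta_\kappa(w)$, where $\eta_\kappa$ is a convex $C^1$ approximation of $s\mapsto s_+^2/2$, and pass to the limit $\kappa\to 0$. The integration-by-parts and the anti-symmetric splitting of the fractional Laplacian both go through at the regularized level, and the $L^\infty$ bound on $\ueps$ from Droniou--Gallouet--Vovelle together with the assumed $L^2$ bound on $(\partial_x u_0)_+$ provides the integrability needed to justify passing to the limit.
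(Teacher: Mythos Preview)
Your proposal is correct and follows essentially the same approach as the paper: differentiate \eqref{main} in $x$, multiply by the positive part of $\partial_x\ueps$, integrate, handle the convective term via integration by parts to extract the $\tfrac12\int A''(\ueps)(\partial_x\ueps)_+^3\,\dx$ contribution, and bound the fractional term exactly as in \eqref{genpa}. Your added remark about regularizing the positive part is a nice touch that the paper leaves implicit.
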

\begin{proof}

Let $v: = \partial_x u_\eps$ and $v_+: = v\mathbf{1}_{v\geq 0}.$
Following the proof of \cite[Lemma 3.2]{CV}, we differentiate (\ref{main}) with respect to $x,$ multiply by $(\partial_x(u_\eps))_+$ and integrate in $x$ to get 
\[
\int (\partial_t v) v_+\dx
		+ \int \partial_x\left[A'(u_\eps) v\right]v_+\dx
		= \eps \int (\Delta_x^{\alpha/2}v) v_+\dx.
\]
Then, using the same estimates as in \eqref{genpa}, we have
\[
\int (\partial_t v) v_+\dx
		+ \int \partial_x\left[A'(u_\eps) v\right]v_+\dx
		= \eps \int (\Delta_x^{\alpha/2}v) v_+\dx\\
		\leq 0 .
\]
Moreover, since
\[
\int (\partial_t v) v_+\dx
		 = \int \partial_t (v_+) v_+\dx
		= \frac{1}{2}\frac{\mathrm{d}}{\dt}\int (v_+)^2 \dx \label{term1},
\]
and 
\begin{align*}
\begin{aligned}
\int \partial_x\left[A'(u_\eps) v\right]v_+\dx
		& = \int A''(u_\eps) |v|^2 v_+\dx
		+ \int A'(x)( \partial_x v)v_+\dx \\
		& = \int A''(u_\eps) (v_+)^3\dx
		+\int A'(x)\partial_x \Big( \frac{(v_+)^2}{2} \Big)\dx \\
		& = \frac{1}{2}\int A''(u_\eps) (v_+)^3\dx,
\end{aligned}
\end{align*}	
we find that
\[
		\frac{\mathrm{d}}{\dt}\|(\partial_x\ueps(\cdot,t))_+\|_{L^2(\R)}^2
		= \frac{\mathrm{d}}{\dt} \int  (v_+)^2\dx
		\leq 	-  \int A''(u_\eps) (v_+)^3 \dx
		\leq 0,
\]
which completes the proof. 
\end{proof}

\subsection{Evolution of the relative entropy}

Let $\vphi$ be a smooth nondecreasing function such that
\begin{align}
\begin{aligned}\label{rough-phi}
\vphi(x) = \left\{ \begin{array}{ll}
        0  &\mbox{if }~ x\le 0,\\
         1 &\mbox{if }~ x\ge 1. \end{array} \right.
\end{aligned}
\end{align}
To localize the layer, we consider a parametrized function $\vphid$, $\delta >0$, defined by 
\[
\vphid(x):=\vphi \left(\frac{x}{\delta}\right).
\]
The parameter $\delta$ will be determined as a function of $\eps$ at the end of the proof.

For fixed $\eps, \delta>0$ and $X\in C^1([0,T])$, we will consider the evolution of 
\beq\label{defH}
\mathcal H(t):=\int_\R \vphid^2\left(\frac{|x|}{\eps^\beta}\right) \frac{ |\ueps(x+X(t),t)-\Seps(x)|^2 }{2} \, dx.
\eeq
Although the above functional is based on the $L^2$-norm, we take advantage of the relative entropy method to get the convergence of $\mathcal H'(t)$ as in \cite{CV}.

The relative entropy method was introduced in the studies by Dafermos \cite{Dafermos1} and Diperna \cite{DiPerna} of $L^2$-stability and uniqueness of Lipschitz solutions to hyperbolic conservation laws endowed with a convex entropy. Recently, this method was extensively used in studying the contraction and inviscid limit for large initial perturbations of viscous (or inviscid) shock waves (see \cite{CV,Kang19,Kang18_KRM,Kang-V-NS17,KV-unique19,KVARMA,Kang-V-1,KVW,Leger,Serre-Vasseur,SV_16,SV_16dcds,Vasseur_Book,Vasseur-2013,VW}).

To use the relative entropy method, in particular we consider the quadratic entropy 
\beq\label{def-ent}
\eta(u):=\frac{u^2}{2},
\eeq
where we  note from the theory of conservation laws that any function is an entropy of the scalar conservation law \eqref{main}. \\
In the general theory, for a strictly convex entropy $\eta$, we define the associated relative entropy function by
\beq\label{def-rel}
\eta(u|v):=\eta(u)-\eta(v) -\eta^{\prime}(v) (u-v).
\eeq
Likewise, we define the relative functional of the strictly convex flux $A$ by
\beq\label{def-A}
A(u|v):=A(u)-A(v) -A'(v) (u-v).
\eeq
Let $F(\cdot,\cdot)$ be the flux of the relative entropy defined by
\beq\label{def-F}
F(u,v) := G(u)-G(v) -\eta^{\prime} (v) (A(u)-A(v)),
\eeq
where $G$ is the entropy flux of $\eta$, i.e., $G^{\prime} = \eta^{\prime} A^{\prime}$.\\

Since, for the quadratic entropy \eqref{def-ent}, the associated relative entropy is
\beq\label{reluv}
\eta(u|v)=\frac{|u-v|^2}{2},
\eeq
the function $\mathcal H(t)$ in \eqref{defH} can be rewritten as
\[
\mathcal H(t):=\int_\R \vphid^2\left(\frac{|x|}{\eps^\beta}\right) \eta( \ueps(x+X(t),t)|\Seps(x)) \,dx .
\]

For simplification of our presentation, we use a change of variable as follows:
\beq\label{change-v}
\ve(x,t):=\ueps( x+X(t), t).
\eeq
Then, it follows from \eqref{main} that $\ve$ satisfies
\begin{align}
\begin{aligned} \label{v-eq}
\partial_t \ve -\dot{X}(t) \partial_x \ve + \partial_x A(\ve) =  \eps \Delta_x^{\alpha/2} \ve .
\end{aligned}
\end{align}

We now present the following lemma.

\begin{lemma}\label{lem:base}
The function $\mathcal H(t)$ defined by \eqref{defH} satisfies
\begin{align}
\begin{aligned} \label{est-H}
		\mathcal H'(t)
		& = \int_{-\infty}^\infty \vphid^2\left(\frac{|x|}{\eps^\beta}\right)\partial_x \left(\eta(\ve(x,t)|\Seps(x))\dot {X}(t) - F(\ve(x,t),\Seps(x))\right)\dx \\
		& \quad+\int_{-\infty}^\infty \vphid^2\left(\frac{|x|}{\eps^\beta}\right)(\ve(x,t) - \Seps(x))\Seps'(x)\left(\dot {X}(t) - \frac{A(\ve(x,t)|\Seps(x))}{\ve(x,t)-\Seps(x)}\right)\dx \\
		& \quad+\eps\int_{-\infty}^\infty \vphid^2\left(\frac{|x|}{\eps^\beta}\right)(\ve(x,t) - \Seps(x))\Delta^{\alpha/2}(\ve(x,t) - \Seps(x))\dx \\
		&=: H_1 +  H_2 + P.
\end{aligned}
\end{align}
\end{lemma}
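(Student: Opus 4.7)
The plan is to directly compute $\mathcal{H}'(t)$ by differentiating under the integral sign, then exploit the equations satisfied by $\ve$ and $\Seps$ together with the algebraic identities of the quadratic relative entropy/flux. The global smoothness of $\ueps$ from \cite{DGV} and the smoothness and monotonicity of $\Seps$ (Lemma \ref{lem:mono}) justify bringing $\partial_t$ inside the integral.

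Since $\Seps$ is time-independent, the chain rule for the relative entropy gives $\partial_t \eta(\ve|\Seps) = [\eta'(\ve) - \eta'(\Seps)]\,\partial_t \ve = (\ve-\Seps)\,\partial_t \ve$ for the quadratic entropy \eqref{def-ent}. Next I would substitute the equation \eqref{v-eq} for $\partial_t \ve$, and simultaneously add zero in the form of the stationary shock equation \eqref{eq_Seps}, i.e.\ $-\partial_x A(\Seps) + \eps\Delta^{\alpha/2}\Seps = 0$, multiplied by $(\ve-\Seps)$. This replaces $-\partial_x A(\ve) + \eps\Delta^{\alpha/2}\ve$ by $-\partial_x(A(\ve) - A(\Seps)) + \eps \Delta^{\alpha/2}(\ve - \Seps)$, which is the standard device that makes the relative-entropy structure appear.

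For the shift contribution, I would write $\partial_x \ve = \partial_x(\ve-\Seps) + \Seps'$ to obtain
\[
(\ve-\Seps)\dot X\,\partial_x \ve \;=\; \dot X\,\partial_x \eta(\ve|\Seps) \;+\; \dot X(\ve-\Seps)\,\Seps'.
\]
For the flux contribution, the key algebraic identity (peculiar to the quadratic $\eta$, where $\eta''\equiv 1$) is
\[
(\ve-\Seps)\,\partial_x\bigl(A(\ve)-A(\Seps)\bigr) \;=\; \partial_x F(\ve,\Seps) \;+\; \Seps'\,A(\ve|\Seps),
\]
which I would verify by expanding $\partial_x F$ using $G' = \eta' A'$ and the definitions \eqref{def-A}, \eqref{def-F}. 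Collecting the shift and flux pieces produces the term $\Seps'(\ve-\Seps)\bigl[\dot X - A(\ve|\Seps)/(\ve-\Seps)\bigr]$, the conservative term $\partial_x[\dot X\,\eta(\ve|\Seps) - F(\ve,\Seps)]$, and the leftover fractional term $\eps(\ve-\Seps)\Delta^{\alpha/2}(\ve-\Seps)$. Multiplying by $\vphid^2(|x|/\eps^\beta)$ and integrating in $x$ yields the three pieces $H_1$, $H_2$, $P$ exactly as stated.

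I do not expect a real obstacle here: the statement is just the pointwise relative-entropy identity against the cutoff $\vphid^2(|x|/\eps^\beta)$, with \emph{no} integration by parts performed (in particular the fractional Laplacian and the derivative in $H_1$ are left intact, which is natural since $\vphid^2$ is not constant and cutting it against the nonlocal operator is precisely the work that will be done later). The only minor care is to verify integrability of each piece uniformly on $[0,T]$, which follows from $\ueps \in L^\infty$, $\ve-\Seps \in L^2$ via the assumption $u_0 - S_0 \in L^2$, and the smoothness/decay of $\Seps'$, so that Fubini and differentiation under the integral are legitimate.
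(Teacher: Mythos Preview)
Your proposal is correct and follows essentially the same route as the paper's proof: differentiate under the integral, use the equations \eqref{v-eq} and \eqref{eq_Seps} to replace $\partial_t\ve$, apply the flux identity $(\ve-\Seps)\partial_x(A(\ve)-A(\Seps)) = \partial_x F(\ve,\Seps) + \Seps' A(\ve|\Seps)$, and split the shift term via $\partial_x\ve = \partial_x(\ve-\Seps)+\Seps'$. The only cosmetic difference is that the paper carries the general $\eta',\eta''$ notation a bit longer before specializing to the quadratic entropy (and cites \cite[Lemma~2.1]{Kang-V-1} for the flux identity), whereas you specialize to $\eta(u)=u^2/2$ from the outset and verify the identity by hand.
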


\begin{proof}
First of all, since 
\[
\mathcal H(t) =  \int_\R \vphid^2\left(\frac{|x|}{\eps^\beta}\right)\eta\left(\ve(x,t)|\Seps\left(x\right)\right)\,dx,
\]
we have
\[
\mathcal H'(t) =   \int_\R \vphid^2\left(\frac{|x|}{\eps^\beta}\right)\partial_t\Big(\eta\left(\ve(x,t)|\Seps\left(x\right)\right) \Big)\,dx.
\]
Note from the definition \eqref{def-rel} that
\begin{align*}
\begin{aligned} 
\partial_t \eta(\ve(x,t)|\Seps(x))  = (\eta'(\ve) - \eta'(\Seps)) \partial_t \ve(x,t) - \eta''(\Seps) (\ve-\Seps) \underbrace{\partial_t \Seps(x)}_{=0}.
\end{aligned}
\end{align*} 
To get a nice quadratic structure from the above right-hand side, we use \eqref{eq_S1} and \eqref{v-eq} so that
\begin{align*}
\begin{aligned}
\partial_t \eta(\ve(x,t)|\Seps(x))  &= (\eta'(\ve) - \eta'(\Seps))  \left( \dot{X}(t) \partial_x \ve - \partial_x A(\ve) + \eps \Delta_x^{\alpha/2} \ve \right) \\
&\quad - \eta''(\Seps) (\ve-\Seps) \left( - (A(\Seps))' + \Delta_{x}^{\alpha/2} \Seps \right) \\
& =\dot{X}(t)(\eta'(\ve) - \eta'(\Seps)) \partial_x \ve \\
&\quad - (\eta'(\ve) - \eta'(\Seps)) \partial_x A(\ve) + \eta''(\Seps) (\ve-\Seps)(A(\Seps))' \\
&\quad + \eps \left(  (\eta'(\ve) - \eta'(\Seps)) \Delta_x^{\alpha/2} \ve - \eta''(\Seps) (\ve-\Seps)\Delta_{x}^{\alpha/2} \Seps \right).
\end{aligned}
\end{align*}
Since a straightforward computation together with the definitions \eqref{def-A} and \eqref{def-F} yields the identity
\[
-\partial_x F(\ve,\Seps) -\eta'' (\Seps) \Seps' A(\ve|\Seps) 
= - (\eta'(\ve) - \eta'(\Seps)) \partial_x A(\ve) + \eta''(\Seps) (\ve-\Seps)(A(\Seps))' 
\]
(which also appears in the proof of \cite[Lemma 2.1]{Kang-V-1}), we have
\begin{align*}
\begin{aligned}
\partial_t \eta(\ve(x,t)|\Seps(x)) 
& =\dot{X}(t)(\eta'(\ve) - \eta'(\Seps)) \partial_x \ve \\
&\quad -\partial_x F(\ve,\Seps) -\eta'' (\Seps) \Seps' A(\ve|\Seps) \\
&\quad + \eps \left(  (\eta'(\ve) - \eta'(\Seps)) \Delta_x^{\alpha/2} \ve - \eta''(\Seps) (\ve-\Seps)\Delta_{x}^{\alpha/2} \Seps \right).
\end{aligned}
\end{align*}
We now use the quadratic entropy \eqref{def-ent} to obtain
\begin{align*}
\begin{aligned}
\partial_t \eta(\ve(x,t)|\Seps(x)) 
& =\dot{X}(t)(\ve - \Seps) \partial_x \ve -\partial_x F(\ve,\Seps) - \Seps' A(\ve|\Seps) \\
&\quad + \eps  (\ve - \Seps)   \Delta_x^{\alpha/2} \left(  \ve - \Seps \right).
\end{aligned}
\end{align*}
Therefore, using \eqref{reluv} and
\[
(\ve - \Seps) \partial_x \ve =  \partial_x \left(\frac{|\ve - \Seps|^2}{2}\right) +  (\ve - \Seps)\partial_x\Seps,
\]
we complete the proof.
\end{proof}

\begin{remark}
Contrary to \cite[Lemma 2.1]{CV}, we have a new hyperbolic part ${H}_2$ in \eqref{est-H}, because we are considering the viscous layer. 
\end{remark}

\subsection{Estimate on the first hyperbolic part ${H}_1$}
Here, we estimate the first part ${H}_1$ of $\mathcal{H}'(t)$ in (\ref{est-H}) by following the strategy in \cite[Section 3]{CV}. For this, we consider the normalized relative entropy flux $f(\cdot,\cdot),$ given by
\beq\label{def-relf}
f(u,v) :=\frac{F(u,v)}{\eta(u|v)}.
\eeq
With such an $f,$ we have the following properties.

\begin{lemma}\label{lem:f}
For any $L>0$, there exists a constant $\Lambda>0$ such that for any ${u},v$ with $|{u}|, |v| \leq L,$
\begin{align*}
		& 0\leq  \partial_{u}f({u},v) \leq \Lambda ,\\
		& \frac{1}{\Lambda} \leq \partial_vf({u},v).
	\end{align*}
\end{lemma}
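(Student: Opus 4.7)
The plan is to derive an integral representation of $f(u,v)$ from which both bounds will be read off by differentiation under the integral sign. Since the entropy is quadratic, $\eta'(v)=v$, so $F(u,v)=G(u)-G(v)-v(A(u)-A(v))$, and a direct computation using $G'=\eta'A'$ gives $\partial_u F(u,v)=(u-v)A'(u)$ with $F(v,v)=0$. Integrating in $u$ yields
\[
F(u,v)=\int_v^u (s-v)\,A'(s)\,\mathrm{d}s,
\]
and after the affine change of variable $s=v+t(u-v)$, $t\in[0,1]$, together with $\eta(u|v)=(u-v)^2/2$, I obtain
\[
f(u,v)=\frac{F(u,v)}{\eta(u|v)}=2\int_0^1 t\,A'\bigl(v+t(u-v)\bigr)\,\mathrm{d}t.
\]

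From this closed form the two partial derivatives are immediate:
\[
\partial_u f(u,v)=2\int_0^1 t^2\,A''\bigl(v+t(u-v)\bigr)\,\mathrm{d}t,\qquad
\partial_v f(u,v)=2\int_0^1 t(1-t)\,A''\bigl(v+t(u-v)\bigr)\,\mathrm{d}t,
\]
where for $\partial_v f$ the factor $(1-t)$ comes from $\partial_v\bigl(v+t(u-v)\bigr)=1-t$. Both integrands are nonnegative because $A$ is strictly convex, so the nonnegativity parts of the lemma follow at once.

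Next I would invoke the assumption $|u|,|v|\le L$: since then the segment $\{v+t(u-v):t\in[0,1]\}$ is contained in $[-L,L]$, strict convexity plus continuity of $A''$ yield constants $0<m_L\le M_L<\infty$ with $m_L\le A''\le M_L$ on $[-L,L]$. Using $\int_0^1 t^2\,\mathrm{d}t=1/3$ gives $\partial_u f(u,v)\le \tfrac{2}{3}M_L$, and using $\int_0^1 t(1-t)\,\mathrm{d}t=1/6$ gives $\partial_v f(u,v)\ge \tfrac{1}{3}m_L$. Choosing $\Lambda:=\max\bigl(\tfrac{2}{3}M_L,\,\tfrac{3}{m_L}\bigr)$ delivers both required inequalities simultaneously.

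Since every step reduces to differentiating under the integral and applying the pointwise bounds on $A''$, I do not expect a real obstacle; the only care needed is the quick verification that $\partial_u F(u,v)=(u-v)A'(u)$ (which relies crucially on $\eta'(v)=v$ for the quadratic entropy and on the compatibility $G'=\eta'A'$), since without that identity the integral representation of $F$ above would not take the clean form that makes $f$ manifestly a weighted average of $A'$.
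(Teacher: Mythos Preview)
Your proof is correct. The integral representation $f(u,v)=2\int_0^1 t\,A'(v+t(u-v))\,\mathrm{d}t$ is exactly the right device, and the differentiation under the integral together with the pointwise bounds $0<m_L\le A''\le M_L$ on $[-L,L]$ gives the stated inequalities; the choice of $\Lambda$ is fine.

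As for comparison: the paper does not actually prove this lemma --- it simply cites Leger \cite{Leger}. Your argument is a clean, self-contained derivation tailored to the quadratic entropy used here, which makes the formula $f=2\int_0^1 t\,A'(\cdot)\,\mathrm{d}t$ especially transparent. The only implicit assumption you rely on is that ``strictly convex'' means $A\in C^2$ with $A''>0$ on compact sets; this is consistent with how the paper uses $A''$ elsewhere (e.g., in the proofs of Lemmas~\ref{lem:mono} and~\ref{lem:L2} and in Proposition~\ref{prop:h2}), so there is no gap.
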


For the proof of the above lemma, we refer to \cite{Leger}.\\

We now define the shift $X$ as a solution to the ODE
\begin{align}
\begin{aligned}\label{ODE}
 \left\{ \begin{array}{ll}
         \dot {X}(t) = f({v_\eps}(0,t), \Seps(0)),\\
         X(0)=0, \end{array} \right.
\end{aligned}
\end{align}
where recall from \eqref{change-v} that $\ve(0,t)=\ueps(X(t),t)$.
As mentioned before, since, for any $\eps>0$, the equation \eqref{main} with $L^\infty$ initial datum admits a unique smooth solution, the Cauchy-Lipschitz theorem together with Lemma \ref{lem:f} implies
the existence and uniqueness of the solution $X$ to the ODE \eqref{ODE}.\\

We now present a bound on $H_1$ in \eqref{est-H}.
In what follows, $C$ denotes a positive constant which may change from line to line, but which is independent on $\eps$.

\begin{proposition}\label{prop:h1}
Let $\vphi$ be a smooth nondecreasing function satisfying \eqref{rough-phi}. Under the same hypotheses as in Theorem \ref{thm}, there exists a positive constant $C=C(\|u_0\|_{L^\infty(\R)}, u_\pm)$ such that for any $\eps, \delta>0$,
	\[
		H_1= \int_{-\infty}^\infty \vphid^2\left(\frac{|x|}{\eps^\beta}\right)\partial_x \left(\eta(\ve(x,t)|\Seps(x))\dot {X}(t) - F(\ve(x,t),\Seps(x))\right)\dx	\leq C \sqrt{\delta\eps^\beta}.
	\]	
\end{proposition}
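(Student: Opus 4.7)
The plan is to integrate by parts and then exploit two cancellations that make most of the resulting integrand manifestly non-positive: one from the ODE defining the shift $X$, and one from the monotonicity of the layer $\Seps$.

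First, I would integrate by parts in $x$. The boundary terms at $x=\pm\infty$ vanish since $\ve,\Seps\to u_\pm$ forces $\eta(\ve|\Seps)\to 0$ and $F(\ve,\Seps)\to 0$. Writing $F = f\cdot\eta(\cdot|\cdot)$ as in \eqref{def-relf} gives
\[
H_1 = -\int_{\R}\partial_x\bigl[\vphid^2(|x|/\eps^\beta)\bigr]\,\eta(\ve|\Seps)\,\bigl(\dot X(t) - f(\ve(x,t),\Seps(x))\bigr)\dx.
\]
Using the ODE \eqref{ODE}, namely $\dot X(t) = f(\ve(0,t),\Seps(0))$, and splitting the $f$-difference by the mean-value theorem,
\[
f(\ve(0),\Seps(0)) - f(\ve(x),\Seps(x)) = \partial_u f\cdot(\ve(0)-\ve(x)) + \partial_v f\cdot(\Seps(0)-\Seps(x)),
\]
decomposes $H_1$ into a ``$\ve$-variation'' piece and a ``$\Seps$-variation'' piece.

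Next comes the sign analysis. Let $\mu(x):=\partial_x[\vphid^2(|x|/\eps^\beta)]$; since $\vphid$ is nondecreasing and nonnegative, $\mu$ has the same sign as $x$, is supported in $|x|\le\delta\eps^\beta$, and satisfies $|\mu(x)|\le C/(\delta\eps^\beta)$. By Lemma \ref{lem:mono} the layer $\Seps$ is non-increasing, so $\Seps(0)-\Seps(x)$ has the same sign as $x$. Combined with $\partial_v f>0$ (Lemma \ref{lem:f}) and $\eta(\ve|\Seps)\ge 0$, this makes the $\Seps$-variation integrand $-\mu(x)\,\eta(\ve|\Seps)\,\partial_v f\,(\Seps(0)-\Seps(x))$ pointwise non-positive, so this entire piece may be dropped from the upper bound. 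In the $\ve$-variation piece (with $0\le\partial_u f\le\Lambda$), the only positive contributions come from the ``bad'' regions where $\ve(x)>\ve(0)$ on $x>0$ and $\ve(x)<\ve(0)$ on $x<0$.

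To bound those bad regions I invoke the one-sided Sobolev-type estimate coming from Lemma \ref{lem:L2}: for $x>0$,
\[
(\ve(x)-\ve(0))_+ \le \int_0^x (\partial_y\ve)_+\dy \le \sqrt{x}\,\|(\partial_y\ve)_+\|_{L^2(\R)} \le \sqrt{x}\,\|(\partial_x u_0)_+\|_{L^2(\R)},
\]
by Cauchy--Schwarz and Lemma \ref{lem:L2}, and symmetrically for $x<0$. Combining $|\mu(x)|\le C/(\delta\eps^\beta)$ with the uniform bound $\eta(\ve|\Seps)\le C(\|u_0\|_{L^\infty},u_\pm)$ yields
\[
H_1 \le \frac{C}{\delta\eps^\beta}\int_{-\delta\eps^\beta}^{\delta\eps^\beta}\sqrt{|x|}\,\dx \le C\sqrt{\delta\eps^\beta},
\]
as claimed. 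The main obstacle, and the reason the sign analysis is indispensable, is the $\Seps$-variation term: a naive Lipschitz bound $|\Seps(0)-\Seps(x)|\le\|S_1'\|_{L^\infty}|x|/\eps^\beta$ would produce a contribution of order $\delta$, which is useless in the regime $\delta\ge 4$, $\eps\to 0$. Monotonicity of the layer (Lemma \ref{lem:mono}) together with positivity of $\partial_v f$ (Lemma \ref{lem:f}) is precisely what lets us discard this term entirely before estimating.
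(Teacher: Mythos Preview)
Your argument is correct and follows essentially the same route as the paper: integrate by parts, write $\dot X - f(\ve,\Seps)$ as the difference $f(\ve(0),\Seps(0))-f(\ve(x),\Seps(x))$, split it into a $\ve$-variation and an $\Seps$-variation, discard the $\Seps$-variation by the sign mechanism (Lemma~\ref{lem:mono} plus $\partial_v f>0$), and control the $\ve$-variation via Lemma~\ref{lem:L2} and Cauchy--Schwarz. The only cosmetic difference is that the paper splits $H_1=H_1^L+H_1^R$ over $(-\infty,0)$ and $(0,\infty)$ and runs the argument on each half, whereas you handle both sides at once through the observation that $\mu(x)$ and $\Seps(0)-\Seps(x)$ share the sign of $x$; the ingredients and the final $\sqrt{\delta\eps^\beta}$ bound are identical.
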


\begin{proof}
First of all, we separate $H_1$ into two parts:
\begin{align*}
\begin{aligned}
H_1(t)&=\int_{-\infty}^0 \left(\vphid\left(-\frac{x}{\eps^\beta}\right)\right)^2 \partial_x\left(\eta({v_\eps}(x,t)|\Seps(x))\dot {X}(t) - F({v_\eps}(x,t)), \Seps(x)\right)\dx  \\
&\quad +\int_0^\infty \left(\vphid\left(\frac{x}{\eps^\beta}\right)\right)^2 \partial_x\left(\eta({v_\eps}(x,t)|\Seps(x))\dot {X}(t) - F({v_\eps}(x,t)), \Seps(x)\right)\dx \\
&=: H_1^L(t) + H_1^R(t).
\end{aligned}
\end{align*}
For $H_1^L$, by an integration by parts together with \eqref{def-relf}, \eqref{reluv} and \eqref{ODE}, we have
\begin{align*}
\begin{aligned}
&H_1^L(t) \\
&= - \int_{-\infty}^0 \partial_x\left[\left(\vphi\left(-\frac{x}{\delta\eps^\beta}\right)\right)^2\right]\left(\eta({v_\eps}(x,t)|\Seps(x))\dot {X}(t) - \eta({v_\eps}(x,t)|\Seps(x))f({v_\eps}(x,t), \Seps(x))\right)\dx \nonumber \\
		&= -\int_{-\infty}^0 \left[-\frac{2}{\delta\eps^\beta}\vphi\left(-\frac{x}{\delta\eps^\beta}\right)\vphi'\left(-\frac{x}{\delta\eps^\beta}\right)\right]\frac{|v_\eps(x,t)-\Seps(x)|^2}{2}\left(\dot {X}(t) - f({v_\eps}(x,t), \Seps(x))\right)\dx \nonumber \\
		&= \frac{1}{\delta\eps^\beta}\int_{-\delta\eps^\beta}^0 \vphi\left(-\frac{x}{\delta\eps^\beta}\right)\vphi'\left(-\frac{x}{\delta\eps^\beta}\right) |v_\eps(x,t)-\Seps(x)|^2  h(x,t) \dx \nonumber,
\end{aligned}
\end{align*}
where 
\[
h(x,t):= f({v_\eps}(0,t), \Seps(0)) - f({v_\eps}(x,t), \Seps(x)).
\]
Notice that since $\|\ueps\|_{L^\infty(\R)}\le \|u_0\|_{L^\infty(\R)}$ by the maximum principle, and $|S_\eps|\leq \max\{{|u_-|}, {|u_+|}\},$ there is a constant 
$C=C(\|u_0\|_{L^\infty(\R)}, u_\pm)$ such that
\beq\label{bdd-vs}
\|v_\eps-\Seps\|_{L^\infty([0,T]\times\R)} \le C.
\eeq
To control $h(x,t)$, we first separate it into two parts:
\begin{align*}
\begin{aligned}
h(x,t)= \underbrace{\left(f({v_\eps}(0,t),\Seps(0)) - f({v_\eps}(x,t),\Seps(0))\right)}_{=:h_1}
		+\underbrace{ \left(f({v_\eps}(x,t),\Seps(0)) - f({v_\eps}(x,t), \Seps(x))\right)}_{=:h_2} .
\end{aligned}
\end{align*}
To estimate $h_1$, using Lemma \ref{lem:L2}, we observe  that for any $x<0$,
\[
v_\eps(0,t)- v_\eps(x,t) = \int_{x+X(t)}^{X(t)} \partial_y u_\eps(y,t) dy \le  \int_{x+X(t)}^{X(t)} (\partial_y u_\eps)_+ (y,t) dy \le  \|\left(\partial_xu_0\right)_+\|_{L^2(\R)} \sqrt{|x|}.
\] 
Then, since $f$ is increasing with respect to the first variable by Lemma \ref{lem:f}, we have
\begin{align*}
\begin{aligned}
h_1&\le f\Big( v_\eps(x,t) + \|\left(\partial_xu_0\right)_+\|_{L^2(\R)} \sqrt{|x|},\Seps(0)\Big) - f({v_\eps}(x,t),\Seps(0)) \\
		&\leq \Lambda\sqrt{|x|} \|\left(\partial_xu_0\right)_+\|_{L^2(\R)}.
\end{aligned}
\end{align*}
Using Lemma \ref{lem:f} and Lemma \ref{lem:mono}, we have
\[
h_2 \le \frac{1}{\Lambda} (\Seps(0)-\Seps(x))\le 0\quad \forall x\le 0.
\]
Therefore, we have
\begin{align*}
\begin{aligned}
H_1^L(t) &\le \frac{C}{\delta\eps^\beta}\int_{-\delta\eps^\beta}^0 \vphi\left(-\frac{x}{\delta\eps^\beta}\right)\vphi'\left(-\frac{x}{\delta\eps^\beta}\right) \sqrt{|x|}\, \dx \nonumber &\le C\sqrt{\delta\eps^\beta} \int_{-1}^0 \vphi(-x) \vphi'(-x) \sqrt{|x|} dx\\
&\le C\sqrt{\delta\eps^\beta},
\end{aligned}
\end{align*}
where the last inequality is obtained by the definition of $\vphi$ as 
\[
\int_{-1}^0 \vphi(-x) \vphi'(-x) \sqrt{|x|} dx \le \int_{-1}^0 \vphi'(-x) dx = \vphi(1)-\vphi(0)=1.
\]
Likewise, using the same method as above, we have
\[
H_1^R(t) \le C\sqrt{\delta\eps^\beta}.
\]
Hence, we complete the proof.
\end{proof}

\subsection{Estimate on the second hyperbolic part ${H}_2$}

Here we find a bound for convergence of the second part $H_2$ in \eqref{est-H}. For this,  we consider a specific choice of the monotone function $\vphi$ satisfying \eqref{rough-phi}, defined by
\begin{align}
\begin{aligned}\label{special-phi}
\vphi(x) = \left\{ \begin{array}{ll}
        0  &\mbox{if }~ x\le 0,\\
         2x^2 &\mbox{if }~ 0 \le x\le 1/2,\\
         1-2(x-1)^2 &\mbox{if }~ 1/2 \le x\le 1,\\
         1 &\mbox{if }~ x\ge 1. \end{array} \right.
\end{aligned}
\end{align}

\begin{proposition}\label{prop:h2}
Let $\vphi$ be the function defined by \eqref{special-phi}.
Under the same hypotheses as in Theorem \ref{thm}, there exists a positive constant $C$ such that for any $\delta\ge 4$,
\begin{align*}
\begin{aligned}
	H_2&=\int_{-\infty}^\infty \vphid^2\left(\frac{|x|}{\eps^\beta}\right)(\ve(x,t) - \Seps(x))\Seps'(x)\left(\dot {X}(t) - \frac{A(\ve(x,t)|\Seps(x))}{\ve(x,t)-\Seps(x)}\right)\dx	\\
	&\leq  C \left(  \left( \frac{1}{\delta}\right)^{3/2} +  \left( S_1 \left(\sqrt\delta\right) - u_+ \right) + \left( u_- - S_1 \left(-\sqrt\delta\right)  \right)  \right),
\end{aligned}
\end{align*}
\end{proposition}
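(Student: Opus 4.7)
The plan is to reduce Proposition~\ref{prop:h2} to a straightforward splitting argument after passing to the natural rescaled variable of the shock layer. I would start by performing the change of variables $y=x/\eps^\beta$ in the definition of $H_2$; since $\Seps(x)=S_1(x/\eps^\beta)$ and thus $\Seps'(x)=S_1'(y)/\eps^\beta$, the powers of $\eps^\beta$ cancel against $\dx=\eps^\beta\,\dy$ and
\[
H_2=\int_\R \vphi^2\!\left(\frac{|y|}{\delta}\right)\bigl(\bar{\ve}(y,t)-S_1(y)\bigr)\,S_1'(y)\,\bigl(\dot X(t)-g(\bar{\ve}(y,t),S_1(y))\bigr)\,\dy,
\]
where I write $\bar{\ve}(y,t):=\ve(\eps^\beta y,t)$ and $g(u,v):=A(u|v)/(u-v)$.

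Next, I would record the universal bounds that the integrand satisfies. The $L^\infty$ estimate \eqref{bdd-vs} gives $|\bar{\ve}-S_1|\le C$. Since $\dot X(t)=f(\ve(0,t),\Seps(0))$ by \eqref{ODE} and writing $A(u|v)=\tfrac12 A''(\xi)(u-v)^2$ yields $|g(u,v)|\le C|u-v|$ for $u,v$ in a bounded range, both $\dot X$ and $g(\bar{\ve},S_1)$ are bounded, hence $|\dot X-g(\bar{\ve},S_1)|\le C$. Combined with Lemma~\ref{lem:mono} ($S_1'\le 0$), the integrand is pointwise dominated by $C\,\vphi^2(|y|/\delta)\,|S_1'(y)|$.

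The heart of the proof is then to exploit the specific profile \eqref{special-phi} and split the integration at $|y|=\sqrt\delta$. Since $\delta\ge 4$ gives $\sqrt\delta\le \delta/2$, the inner region $|y|\le\sqrt\delta$ lies entirely in the quadratic regime of $\vphi$, so
\[
\vphi^2\!\left(\frac{|y|}{\delta}\right)=4\left(\frac{|y|}{\delta}\right)^4\le \frac{4|y|^4}{\delta^4}\le \frac{4}{\delta^2}\qquad\text{for }|y|\le\sqrt\delta.
\]
Combined with the total-variation identity $\int_\R|S_1'|\,\dy=u_--u_+$, the inner contribution is controlled by $C/\delta^2\le C\,(1/\delta)^{3/2}$ (using $\delta\ge 4$). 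For the outer region $|y|\ge\sqrt\delta$, I would just use $\vphi^2\le 1$ and the elementary monotonicity identities
\[
\int_{\sqrt\delta}^{\infty}|S_1'|\,\dy=S_1(\sqrt\delta)-u_+,\qquad \int_{-\infty}^{-\sqrt\delta}|S_1'|\,\dy=u_--S_1(-\sqrt\delta),
\]
which produce the two boundary terms in the stated bound. Summing the inner and outer estimates yields Proposition~\ref{prop:h2}.

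The main point is conceptual rather than technical: one has to notice that no fine cancellation between $\dot X$ and $g(\bar{\ve},S_1)$ is actually needed, because the polynomial flatness of $\vphi$ at the origin, built into the choice \eqref{special-phi}, together with crude $L^\infty$ control already absorbs the mass of $S_1'$ concentrated near the layer. The hypothesis $\delta\ge 4$ is precisely what guarantees that $\sqrt\delta$ lies in the region $\{z\le 1/2\}$ where $\vphi(z)=2z^2$, which is exactly the place where the quadratic vanishing is usable.
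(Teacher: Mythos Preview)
Your proof is correct and follows essentially the same route as the paper: bound the bracket $\dot X - A(\ve|\Seps)/(\ve-\Seps)$ and the factor $\ve-\Seps$ by constants via \eqref{bdd-vs} and Lemma~\ref{lem:f}, rescale so that $\Seps'$ becomes $S_1'$, and split at $|y|=\sqrt\delta$ using the quadratic vanishing of $\vphi$ from \eqref{special-phi}. The only cosmetic difference is that on the inner piece the paper pulls out $\|S_1'\|_{L^\infty}$ and integrates $|x|^4/\delta^4$ exactly to get $C\delta^{-3/2}$, whereas you bound $|y|^4/\delta^4\le \delta^{-2}$ and use the total variation $\int|S_1'|=u_--u_+$; your estimate is in fact slightly sharper before you relax $\delta^{-2}$ to $\delta^{-3/2}$.
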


\begin{proof}
Since $\ve$ and $\Seps$ are bounded as mentioned in \eqref{bdd-vs}, using the definition \eqref{def-A} of $A(\cdot|\cdot)$, and \eqref{ODE} with Lemma \ref{lem:f}, we observe that there exists a positive constant $C=C(\|u_0\|_{L^\infty(\R)}, u_\pm)$ such that for all $x\in\bbr$ and $t\le T$,
\[
A(\ve|\Seps)=(\ve-\Seps)^2\int_0^1\int_0^1 A''(\Seps + st(\ve-\Seps)) t dsdt \le \|A''\|_{L^\infty(-C,C)} |\ve-\Seps|^2,
\] 
and
\[
|\dot {X}(t)| \le C.
\]
Therefore, using \eqref{bdd-vs}, we have
\[
H_2 \le C \int_{-\infty}^\infty \vphid^2\left(\frac{|x|}{\eps^\beta}\right) |\Seps'(x)|\, \dx.
\]
Note that, since $\Seps'(x) = \eps^{-\beta}S_1'(x\eps^{-\beta})$, 
\[
H_2 \le C \int_{-\infty}^\infty \vphid^2\left(|x|\right) |S_1'(x)| \, \dx.
\]
We now separate the right hand side into two parts:
\[
\int_{-\infty}^\infty \vphid^2\left(|x|\right) |S_1'(x)| \, \dx =\int_{|x|\le\sqrt{\delta}} \vphid^2\left(|x|\right) |S_1'(x)| \, \dx + \int_{|x|\ge\sqrt{\delta}} \vphid^2\left(|x|\right) |S_1'(x)| \, \dx .
\]
For any $\delta\ge 4$, since
\[
|x|\le \sqrt\delta \quad \Rightarrow \quad \frac{|x|}{\delta} \le \frac{1}{\sqrt\delta} \le \frac{1}{2},
\]
we use \eqref{special-phi} to get
\begin{align*}
\begin{aligned}
\int_{|x|\le\sqrt{\delta}} \vphid^2\left(|x|\right) |S_1'(x)| \, \dx &\le \|S_1'\|_{L^\infty(\bbr)} \int_{|x|\le\sqrt{\delta}}  \vphi^2\left(\frac{|x|}{\delta}\right)\,\dx  \\
&= \|S_1'\|_{L^\infty(\bbr)}  \frac{4}{\delta^4}  \int_{|x|\le\sqrt{\delta}} |x|^4 \,\dx \le C \left( \frac{1}{\delta}\right)^{3/2}.
\end{aligned}
\end{align*}
Using Lemma \ref{lem:mono}, we have
\begin{align*}
\begin{aligned}
 \int_{|x|\ge\sqrt{\delta}} \vphid^2\left(|x|\right) |S_1'(x)| \, \dx & \le \int_{|x|\ge\sqrt{\delta}}  |S_1'(x)| \, \dx =  - \int_{|x|\ge\sqrt{\delta}}  S_1'(x) \, \dx \\
&= \left( S_1 \left(\sqrt\delta\right) - u_+ \right) + \left( u_- - S_1 \left(-\sqrt\delta\right)  \right) .
\end{aligned}
\end{align*}
Hence we complete the proof.
\end{proof}

\subsection{Estimate on the parabolic part $P$}

\begin{proposition}\label{prop:para}
Let $\vphi$ be a smooth nondecreasing function satisfying \eqref{rough-phi}. Under the same hypotheses as in Theorem \ref{thm}, there exists a positive constant $C$ such that for any $\delta> 0$,
\begin{align*}
\begin{aligned}
	P=\eps\int_\bbr \vphid^2\left(\frac{|x|}{\eps^\beta}\right)(\ve(x,t) - \Seps(x))\Delta^{\alpha/2}(\ve(x,t) - \Seps(x))\dx	\leq  C \left( \frac{1}{\delta}\right)^{\alpha-1} .
\end{aligned}
\end{align*}
\end{proposition}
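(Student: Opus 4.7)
The plan is to scale out $\eps$ by a change of variables, then apply the Córdoba--Córdoba inequality to transfer the fractional derivative from $w:=\ve-\Seps$ onto $w^2$, integrate by parts to move it onto the cutoff $\vphid^2(|\cdot|)$, and conclude by the natural dilation behaviour of $\Delta^{\alpha/2}$, which produces the factor $\delta^{1-\alpha}$.

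First we set $\tilde w(y):=(\ve-\Seps)(\eps^\beta y)$. Using $\Seps=S_1(\cdot/\eps^\beta)$ and the homogeneity $\Delta^{\alpha/2}[f(\cdot/\eps^\beta)](x)=\eps^{-\alpha\beta}(\Delta^{\alpha/2}f)(x/\eps^\beta)$, the Jacobian $\eps^\beta$ together with the prefactor $\eps$ in front of $P$ produces the overall power $\eps^{1+\beta-\alpha\beta}=\eps^{1-(\alpha-1)\beta}=\eps^0=1$, since $\beta=1/(\alpha-1)$. Hence
\[
P=\int_\bbr \vphid^2(|y|)\,\tilde w(y)\,\Delta^{\alpha/2}\tilde w(y)\,\dy,
\]
and \eqref{bdd-vs} supplies $\|\tilde w\|_{L^\infty(\bbr)}\le C$ uniformly in $\eps$, so the problem reduces to an $\eps$-free estimate for a bounded function.

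Next, the pointwise identity $a(b-a)=\tfrac12(b^2-a^2)-\tfrac12(b-a)^2$, applied inside the singular integral defining $\Delta^{\alpha/2}\tilde w$, gives the Córdoba--Córdoba decomposition
\[
\tilde w(y)\,\Delta^{\alpha/2}\tilde w(y)=\tfrac12\Delta^{\alpha/2}(\tilde w^2)(y)-\tfrac{c_\alpha}{2}\int_\bbr\frac{(\tilde w(z)-\tilde w(y))^2}{|z-y|^{1+\alpha}}\,\dz.
\]
Multiplying by $\vphid^2(|y|)\ge 0$ and dropping the (nonpositive) dissipative defect yields $P\le \tfrac12\int \vphid^2(|y|)\,\Delta^{\alpha/2}(\tilde w^2)\,\dy$. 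Now writing $\vphid^2(|\cdot|)=1-\chi$, where $\chi(y):=1-\vphid^2(|y|)$ is smooth and compactly supported in $[-\delta,\delta]$ (because $\vphi\equiv 1$ on $[1,\infty)$), and using that $\Delta^{\alpha/2}$ annihilates constants, the associated symmetric double integral (as in \eqref{genpa}) is absolutely convergent and transfers the fractional Laplacian onto the cutoff:
\[
P\le\tfrac12\int_\bbr \tilde w^2(y)\,\Delta^{\alpha/2}\!\bigl(\vphid^2(|\cdot|)\bigr)(y)\,\dy\le\tfrac{\|\tilde w\|_{L^\infty}^2}{2}\,\bigl\|\Delta^{\alpha/2}\bigl(\vphid^2(|\cdot|)\bigr)\bigr\|_{L^1(\bbr)}.
\]
Finally, the dilation identity $\Delta^{\alpha/2}[\psi(\cdot/\delta)](y)=\delta^{-\alpha}(\Delta^{\alpha/2}\psi)(y/\delta)$, applied to $\psi(z):=\vphi^2(|z|)$, yields $\|\Delta^{\alpha/2}(\vphid^2(|\cdot|))\|_{L^1}=\delta^{1-\alpha}\|\Delta^{\alpha/2}\psi\|_{L^1}$, and $\|\Delta^{\alpha/2}\psi\|_{L^1}<\infty$ because $\psi-1$ is smooth and compactly supported. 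Combining everything delivers $P\le C(1/\delta)^{\alpha-1}$.

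The main obstacle is the rigorous integration by parts in the third step, since neither $\tilde w^2$ nor $\vphid^2(|\cdot|)$ decays at infinity. Writing the cutoff as $1-\chi$ with $\chi$ smooth and compactly supported is precisely what cures this: the constant piece is annihilated by $\Delta^{\alpha/2}$ and the remaining symmetric double integral is absolutely convergent, legitimizing the $x\leftrightarrow y$ swap. A secondary point, the pointwise legality of the Córdoba--Córdoba step, is handled by the smoothness of $\ueps$ provided by \cite{DGV}.
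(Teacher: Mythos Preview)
Your argument is correct and reaches the same bound $C\delta^{1-\alpha}$, but by a genuinely different route than the paper. The paper never invokes C\'ordoba--C\'ordoba or integrates by parts; instead it symmetrizes the double integral for $P$ and uses the algebraic identity
\[
\vphid^2(x)a-\vphid^2(y)b=\bigl(\vphid(x)a-\vphid(y)b\bigr)\vphid(x)+\bigl(\vphid(x)-\vphid(y)\bigr)\vphid(y)b
\]
(applied with $a=w_\eps(x)$, $b=w_\eps(y)$) to arrive, after cancellation, at
\[
P=-\tfrac{\eps}{2}\iint\frac{[\vphid(\cdot)w_\eps(x)-\vphid(\cdot)w_\eps(y)]^2}{|x-y|^{1+\alpha}}\,\dx\,\dy
+\tfrac{\eps}{2}\iint\frac{[\vphid(\cdot)-\vphid(\cdot)]^2\,w_\eps(x)w_\eps(y)}{|x-y|^{1+\alpha}}\,\dx\,\dy,
\]
drops the first (nonpositive) term, bounds the second by $\|w_\eps\|_{L^\infty}^2$ times the $\dot H^{\alpha/2}$-seminorm of $\vphid(|\cdot|/\eps^\beta)$, and then scales. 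So the paper's ``remainder'' is a Gagliardo seminorm of $\vphid$, whereas yours is $\|\Delta^{\alpha/2}(\vphid^2(|\cdot|))\|_{L^1}$; both obey the same dilation law and are finite for the same reason (the deviation of the cutoff from $1$ is smooth and compactly supported). Your route is a bit more conceptual (a named pointwise inequality plus self-adjointness), while the paper's is purely algebraic and entirely self-contained; the paper also never needs to argue separately that $\Delta^{\alpha/2}\psi\in L^1$, since the finiteness check happens directly on the symmetric double integral. Either way the scaling in $\delta$ drops out identically.
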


\begin{proof}
For simplicity, here we set 
\[
w_\eps(x,t):=\ve(x,t) - \Seps(x).
\]
First, using \eqref{fractional} and anti-symmetry, we have
\begin{align*}
\begin{aligned}
P & = \eps \int_{\R} \vphid^2\left(\frac{|x|}{\eps^\beta}\right) {w_\eps}(x,t)\Delta^{\alpha/2}{w_\eps}(x,t) \, \dx \\
& = \eps \iint_{\R\times \R} \vphid^2\left(\frac{|x|}{\eps^\beta}\right) {w_\eps}(x,t)\frac{{w_\eps}(y,t) - {w_\eps}(x,t)}{|x-y|^{1+\alpha}}\dy\dx \\
& = -\frac{\eps}{2} \iint_{\R\times \R} \Big[\vphid^2\left(\frac{|x|}{\eps^\beta}\right) {w_\eps}(x,t) - \vphid^2\left(\frac{|y|}{\eps^\beta}\right) {w_\eps}(y,t)\Big]\frac{{w_\eps}(x,t) - {w_\eps}(y,t)}{|x-y|^{1+\alpha}}\dy\dx ,
\end{aligned}
\end{align*}
which can be rewritten into
\begin{align*}
\begin{aligned}
P & = -\frac{\eps}{2} \iint_{\R\times \R} \Big[\vphid \left(\frac{|x|}{\eps^\beta}\right) {w_\eps}(x,t) - \vphid \left(\frac{|y|}{\eps^\beta}\right) {w_\eps}(y,t)\Big] \vphid \left(\frac{|x|}{\eps^\beta}\right) \frac{{w_\eps}(x,t) - {w_\eps}(y,t)}{|x-y|^{1+\alpha}}\dy\dx \\
 &\quad -\frac{\eps}{2} \iint_{\R\times \R} \Big[\vphid \left(\frac{|x|}{\eps^\beta}\right)  - \vphid \left(\frac{|y|}{\eps^\beta}\right) \Big] \vphid \left(\frac{|y|}{\eps^\beta}\right) {w_\eps}(y,t) \frac{{w_\eps}(x,t) - {w_\eps}(y,t)}{|x-y|^{1+\alpha}}\dy\dx \\
 &=:P_1 +P_2 .
\end{aligned}
\end{align*}
Since
\begin{align*}
\begin{aligned}
P_1 & = -\frac{\eps}{2} \iint_{\R\times \R}\left[ \vphid \left(\frac{|x|}{\eps^\beta}\right){w_\eps}(x,t) -\vphid \left(\frac{|y|}{\eps^\beta}\right){w_\eps}(y,t) \right]^2 \frac{1}{|x-y|^{1+\alpha}}\dy\dx   \\
&\quad +\frac{\eps}{2} \iint_{\R\times \R}\left[ \vphid \left(\frac{|x|}{\eps^\beta}\right){w_\eps}(x,t) -\vphid \left(\frac{|y|}{\eps^\beta}\right){w_\eps}(y,t) \right]
\frac{\left[ \vphid \left(\frac{|x|}{\eps^\beta}\right) -\vphid \left(\frac{|y|}{\eps^\beta}\right) \right] {w_\eps}(y,t)}{|x-y|^{1+\alpha}}\dy\dx, \\
P_2 & = -\frac{\eps}{2} \iint_{\R\times \R}\left[ \vphid \left(\frac{|x|}{\eps^\beta}\right){w_\eps}(x,t) -\vphid \left(\frac{|y|}{\eps^\beta}\right){w_\eps}(y,t) \right] 
\frac{\left[ \vphid \left(\frac{|x|}{\eps^\beta}\right) -\vphid \left(\frac{|y|}{\eps^\beta}\right) \right] {w_\eps}(y,t)}{|x-y|^{1+\alpha}}\dy\dx  \\
&\quad +  \frac{\eps}{2} \iint_{\R\times \R}\left[\vphid\left(\frac{|x|}{\eps^\beta}\right) - \vphid\left(\frac{|y|}{\eps^\beta}\right)\right]^2 \frac{{w_\eps}(x,t){w_\eps}(y,t)}{|x-y|^{1+\alpha}}\dy\dx ,
\end{aligned}
\end{align*}
we have
\begin{align*}
\begin{aligned}
P & = -\frac{\eps}{2} \iint_{\R\times \R}\left[ \vphid \left(\frac{|x|}{\eps^\beta}\right){w_\eps}(x,t) -\vphid \left(\frac{|y|}{\eps^\beta}\right){w_\eps}(y,t) \right]^2 \frac{1}{|x-y|^{1+\alpha}}\dy\dx   \\
&\quad +  \frac{\eps}{2} \iint_{\R\times \R}\left[\vphid\left(\frac{|x|}{\eps^\beta}\right) - \vphid\left(\frac{|y|}{\eps^\beta}\right)\right]^2 \frac{{w_\eps}(x,t){w_\eps}(y,t)}{|x-y|^{1+\alpha}}\dy\dx ,
\end{aligned}
\end{align*}
which gives
\[
P\le \frac{\eps}{2} \iint_{\R\times \R}\left[\vphid\left(\frac{|x|}{\eps^\beta}\right) - \vphid\left(\frac{|y|}{\eps^\beta}\right)\right]^2 \frac{{w_\eps}(x,t){w_\eps}(y,t)}{|x-y|^{1+\alpha}}\dy\dx .
\]
Using \eqref{bdd-vs}, we have
\[
P\le C \eps \iint_{\R\times \R} \left[\vphid\left(\frac{|x|}{\eps^\beta}\right) - \vphid\left(\frac{|y|}{\eps^\beta}\right)\right]^2 \frac{1}{|x-y|^{1+\alpha}}\dy\dx .
\]
Since
\begin{align*}
\begin{aligned}
&  \iint_{\R\times \R} \left[\vphid\left(\frac{|x|}{\eps^\beta}\right) - \vphid\left(\frac{|y|}{\eps^\beta}\right)\right]^2 \frac{1}{|x-y|^{1+\alpha}}\dy\dx \\
& \quad =\left(\frac{1}{\delta\eps^\beta}\right)^{1+\alpha} \iint_{\R\times\R}\left[\vphi\left(\frac{|x|}{\delta\eps^\beta}\right) - \vphi \left(\frac{|y|}{\delta\eps^\beta}\right)\right]^2 \frac{1}{|x\delta^{-1}\eps^{-\beta}-y\delta^{-1}\eps^{-\beta}|^{1+\alpha}}\dy\dx \\
&\quad =\left(\frac{1}{\delta}\right)^{\alpha-1} \left(\frac{1}{\eps}\right)^{\beta(\alpha-1)} \iint_{\R\times\R} \frac{\left[\vphi\left(|x|\right) - \vphi\left(|y|\right)\right]^2}{|x-y|^{1+\alpha}}\dy\dx ,
\end{aligned}
\end{align*}
using $\beta=\frac{1}{\alpha-1}$, we have
\[
P\le C\left(\frac{1}{\delta}\right)^{\alpha-1}  \iint_{\R\times\R}  \underbrace{ \frac{\left[\vphi\left(|x|\right) - \vphi\left(|y|\right)\right]^2}{|x-y|^{1+\alpha}}  }_{=:J}  \dy\dx.
\]
Now, it remains to show that $J$ is integrable on $\R\times\R$. To this end, we separate the integral into several parts:
\begin{align*}
\begin{aligned}
 \iint  J&=\iint_{|x|\le1, |y|\le 2} J + \iint_{|x|\le1, |y|> 2} J +\iint_{|x|>1, |y|> 1} J +\iint_{1\le |x|\le 2, |y|\le1} J +\iint_{|x|> 2, |y|\le1} J    \\
 &=: I_1 +I_2 +I_3+I_4+I_5 .
\end{aligned}
\end{align*}
Using the smoothness of $\vphi$ and the boundedness of $\{|x|\le1, |y|\le 2\}$, we have
\[ 
I_1\le C \iint_{|x|\le1, |y|\le 2} \frac{1}{|x-y|^{\alpha-1}}\dy\dx ,
\]
which together with $1<\alpha<2$ implies $I_1<\infty$. \\
For $I_2$, observe that $|x-y| \geq |y| - |x| \geq |y| -1 \geq \frac{|y|}{2}$ for any $x, y$ with $|x|\le1, |y|> 2$, and $|\vphi(y) - \vphi(x)| \leq 2\|\vphi\|_\infty \le 2$. Thus, we have
\[
I_2\le C \int_{|x|\le1} \int_{ |y|> 2} \frac{1}{|y|^{1+\alpha}}\dy\dx <\infty .
\]
Since $\vphi(|x|) = \vphi(|y|) = 1$ for all $|x|, |y|\ge 1$ by \eqref{rough-phi}, we have $I_3 = 0$.\\
We use the same estimate as in $I_1$ to have $I_4 <\infty$.\\
Note that $J_5=J_2$ by symmetry.\\
Hence we complete the proof.
\end{proof}

\subsection{Conclusion}
It follows from Lemma \ref{lem:base} and Propositions \ref{prop:h1}, \ref{prop:h2}, \ref{prop:para} that for any $\delta\ge 4$ and $\eps>0$,
\begin{align*}
\begin{aligned}
\mathcal{H}'(t) &\le C \left( \sqrt{\delta\eps^\beta} + \left( \frac{1}{\delta}\right)^{3/2} +  \left( S_1 \left(\sqrt\delta\right) - u_+ \right) + \left( u_- - S_1 \left(-\sqrt\delta\right)  \right)  + \left( \frac{1}{\delta}\right)^{\alpha-1}  \right) \\
&\le C \underbrace{ \left( \sqrt{\delta\eps^\beta} +  \left( S_1 \left(\sqrt\delta\right) - u_+ \right) + \left( u_- - S_1 \left(-\sqrt\delta\right)  \right)  + \left( \frac{1}{\delta}\right)^{\alpha-1}  \right)}_{=:E(\eps, \delta)}.
\end{aligned}
\end{align*}
This, together with \eqref{defH}, implies
\begin{align*}
\begin{aligned}
& \int_{\{|x| \ge \delta\eps^\beta \}}\frac{ |\ueps(x+X(t),t)-\Seps(x)|^2 }{2} \, dx \le \mathcal{H}(t) =  \mathcal{H}(0) + \int_0^t  \mathcal{H}'(s) ds \\
&\quad \le  \mathcal{H}(0) +  CT E(\eps, \delta) .
\end{aligned}
\end{align*}
Moreover, since \eqref{bdd-vs} yields
\[
 \int_{\{|x| \le \delta\eps^\beta \}}\frac{ |\ueps(x+X(t),t)-\Seps(x)|^2 }{2} \, dx \le \frac{1}{2} \|v_\eps-\Seps\|_{L^\infty([0,T]\times\R)}^2  \int_{\{|x| \le \delta\eps^\beta \}} \, dx \le C \delta\eps^\beta ,
\]
we have
\begin{align*}
\begin{aligned}
&\int_\R \frac{ |\ueps(x+X(t),t)-\Seps(x)|^2 }{2} \, dx  \le \int_\R \frac{ | u_0(x)-\Seps(x)|^2 }{2} \, dx +  CT  \left( \delta\eps^\beta +E(\eps, \delta)  \right) .
\end{aligned}
\end{align*}
Therefore, 
\[
 \|\ueps(\cdot+X(t),t)-S_\eps\|_{L^2(\R)} \le \|u_0 -S_\eps\|_{L^2(\R)}   + C(T) \sqrt{\delta\eps^\beta +E(\eps, \delta) } .
\]
Then, using
\[
 \| S_\eps- S_0\|_{L^2(\R)} = \eps^{\beta/2} \| S_1 - S_0\|_{L^2(\R)}  ,
\]
we have
\begin{align*}
\begin{aligned}
\|\ueps(\cdot+X(t),t)-S_0\|_{L^2(\R)} &\le \|\ueps(\cdot+X(t),t)-S_\eps\|_{L^2(\R)} + \|S_\eps-S_0\|_{L^2(\R)} \\
&\le \|u_0 -S_\eps\|_{L^2(\R)}   +  C(T) \sqrt{\delta\eps^\beta +E(\eps, \delta) } + C \eps^{\beta/2} \\
&\le \|u_0 -S_0\|_{L^2(\R)} +\|S_\eps-S_0\|_{L^2(\R)}  +  C(T) \left(\sqrt{\delta\eps^\beta +E(\eps, \delta) }  + \eps^{\beta/2} \right)\\
&\le \|u_0 -S_0\|_{L^2(\R)}  + C(T) \left(\sqrt{\delta\eps^\beta +E(\eps, \delta) } + \eps^{\beta/2} \right).
\end{aligned}
\end{align*}
Therefore, for some constant $C(T)$,
\[
\|\ueps(\cdot+X(t),t)-S_0\|_{L^2(\R)}  \le \|u_0 -S_0\|_{L^2(\R)}  + C(T) \psi(\eps),
\]
where 
\[
\psi(\eps):=\inf_{\delta\ge 4} \left(\sqrt{\delta\eps^\beta +E(\eps, \delta) } + \eps^{\beta/2} \right) .
\]
This completes the proof.

\bibliography{Kang2019}

\end{document}